\theoremstyle{plain}
\newtheorem{theorem}{Theorem}[section]
\newtheorem{lemma}{Lemma}[section]
\newtheorem{prop}{Proposition}[section]
\newtheorem{cor}{Corollary}[section]
\theoremstyle{definition}
\newcommand{\comment}[1]
\begin{document}

\title{Centralizers of Toeplitz operators with polynomial symbols}
\author{ Akaki Tikaradze}
\address{The University of Toledo, Department of Mathematics, Toledo, Ohio, USA}
\email{\tt atikara@utnet.utoledo.edu}

\begin{abstract}
In this note we describe centralizers of Toeplitz operators with polynomial symbols on the Bergman space.
As a consequence it is shown that if an element of the norm closed algebra generated by all Toeplitz operators commutes with a Toeplitz
operator of a nonconstant polynomial, then this element is a Toeplitz operator of a bounded 
holomorphic function.

\end{abstract}
\maketitle

Following usual notation, $L^2_a(D)$ will denote the Bergman space of the square integrable holomorphic
functions on the open unit disk $D=(z\in \mathbb{C}, |z|< 1)$, and $H^{\infty}(D)$ will denote
the set of bounded holomorphic functions on $D$. Recall that $L^2_a(D)$ is a Hilbert space under the inner product
$\langle f, g\rangle=\int_{D}f\bar{g}dA$ with respect to the standard Lebesgue measure 
with measure of the unit disc being 1. 
Elements $\sqrt{n+1}z^n, n\geq 0$ form an orthonormal basis of $L^2_a(D).$ Recall also
that for any bounded function $f\in L^{\infty}(D)$, one can define
a bounded linear operator, called the Toeplitz operator $T_{f}:L^2_a\to L^2_a$ with symbol $f$ defined as follows: $T_f(g)=P(fg),g\in L^2_a(D),$ where $P$ is the orthogonal projection from $L^2(D)$ to $L^2_a(D).$
We will consider a $C^{*}$-algebra generated by all Toeplitz operators, which becomes
a $C^{*}$-subalgebra of the algebra of all bounded operators on $L^2_a(D).$ We will refer
to this algebra as the Toeplitz algebra.

 Cuckovic \cite{C} and Cuckovic-Fan \cite{CF} proved that centralizers of $T_{h}$ in the Toeplitz algebra are Toeplitz operators with bounded analytic symbols, provided that
 $h=z^m$ for some $m>0$ \cite{C}, or $h=z+\sum_{i=2}^na_iz^i, a_i\geq 0$ for all $i.$ 
Motivated by these results, we show the following

\begin{theorem}\label{Main} Let $S:L^2_a(D)\to L^2_a(D)$ be a bounded linear operator 
which commutes
with $T_{h(z^m)},$ where $h(z)$ is a polynomial which is not of the form $h_1(z^l),$
 with $h_1$ a polynomial and $l$ a positive integer $l>1$. 
Then, $[S, T_{z^m}]=0.$ In particular, if in addition $S$ is compact, then $S=0.$
\end{theorem}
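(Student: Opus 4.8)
The plan is to reduce, via the decomposition of $L^2_a(D)$ into residue classes modulo $m$, to a rigidity statement about intertwiners of weighted shifts, and then to settle that statement by passing to reproducing--kernel generating functions. Replacing $h$ by $h-h(0)$ we may assume $h(0)=0$; write $h(z)=\sum_j c_jz^j$. First observe that $T_{h(z^m)}=h(T_{z^m})$: since $z^m\in H^{\infty}(D)$, the operator $T_{z^m}$ is multiplication by $z^m$ followed by the Bergman projection, so $T_{z^m}^j(z^n)=z^{n+jm}$ and hence $T_{h(z^m)}(z^n)=h(z^m)z^n=\sum_j c_jT_{z^m}^j(z^n)$. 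Consequently $T_{z^m}$ and $T_{h(z^m)}$ both preserve each closed subspace $H_r=\overline{\mathrm{span}}\{z^n:n\equiv r\ (\mathrm{mod}\ m)\}$, $0\le r<m$, and on $H_r$ the operator $T_{z^m}$ acts as an injective weighted unilateral shift $A_r$ with weights $w^{(r)}_k=\sqrt{(r+km+1)/(r+km+m+1)}\to1$. Writing $S=(S_{rs})$ in block form relative to $L^2_a(D)=\bigoplus_{r=0}^{m-1}H_r$, the hypothesis becomes $S_{rs}h(A_s)=h(A_r)S_{rs}$ for all $r,s$, and the asserted conclusion $[S,T_{z^m}]=0$ becomes $S_{rs}A_s=A_rS_{rs}$ for all $r,s$. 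So (the case $\deg h\le1$ being trivial) it suffices to show: if $X$ is bounded and $Xh(B)=h(C)X$, where $B,C$ are two of the shifts $A_r$, then $XB=CX$.

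For this I would absorb the weights into a diagonal rescaling $Z_{kl}=(W^B_l/W^C_k)X_{kl}$ of the matrix of $X$, where $W^B_k,W^C_k$ are the partial products of the weights of $B,C$. A direct computation turns $Xh(B)=h(C)X$ into the weight--free recursion $\sum_j c_jZ_{k,l+j}=\sum_{1\le j\le k}c_jZ_{k-j,l}$ $(k,l\ge0)$, and turns $XB=CX$ into the statement that $Z$ is lower--triangular Toeplitz: $Z_{kl}=\gamma_{k-l}$ for $k\ge l$ and $Z_{kl}=0$ for $k<l$. Boundedness of $X$ gives $|Z_{kl}|\le c\sqrt{k+1}\,\|X\|$, so $Z(z,\bar w):=\sum_{k,l\ge0}Z_{kl}z^k\bar w^l$ is holomorphic on the bidisk $D\times D$. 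Summing the recursion yields, with $d=\deg h$ and $h^{*}(\zeta)=\zeta^{d}h(1/\zeta)$, the functional equation $\big(h^{*}(\bar w)-\bar w^{d}h(z)\big)Z(z,\bar w)=Q(z,\bar w)$, where $Q$ is a polynomial of degree $<d$ in $\bar w$ whose coefficients are the first $d$ columns of $Z$. Since $z\bar w-1$ divides $h^{*}(\bar w)-\bar w^{d}h(z)$ in $\mathbb{C}[z,\bar w]$, writing $E$ for the (squarefree) quotient we get $Z=Q/\big((z\bar w-1)E\big)$, and the roots of $E(z,\cdot)$ are the reciprocals of $\rho_2(z),\dots,\rho_d(z)$, where $\rho_1(z)=z,\rho_2(z),\dots,\rho_d(z)$ are the roots of $h(\zeta)=h(z)$. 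As $z\bar w-1\ne0$ on the bidisk, holomorphy of $Z$ there forces $Q$ to vanish on $\{E=0\}\cap(D\times D)$; and ``$Z$ is lower--triangular Toeplitz'' is equivalent to $E\mid Q$ (then $Q=\gamma(z)E$ and $Z=\gamma(z)/(1-z\bar w)$). Hence the whole matter reduces to proving that every irreducible factor of $E$ vanishes somewhere on the bidisk, i.e. that every irreducible component of the affine curve $\{(z,\zeta):h(\zeta)=h(z),\ \zeta\ne z\}$ contains a point with $|z|<1<|\zeta|$.

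This is the step I expect to carry all the difficulty, and it is exactly here that the hypothesis on $h$ enters, through the following function--theoretic fact: if $h$ is not of the form $h_1(z^l)$ with $l>1$ (equivalently $\gcd\{j\ge1:c_j\ne0\}=1$), then $h^{-1}(h(D))\supsetneq D$ and, more precisely, every component of the curve above meets $\{|z|<1<|\zeta|\}$. For the first assertion one argues by contradiction: if $h^{-1}(h(D))=D$ then $h\colon D\to\Omega:=h(D)$ is a proper holomorphic map of degree $\deg h$, so by Riemann--Hurwitz $\Omega$ is simply connected and all critical points of $h$ lie in $D$; composing $h$ with Riemann maps of $\Omega$ and of the exterior domain of $\overline{\Omega}$, and using that the polynomial $h$ has no branch point outside $D$ other than $\infty$, forces the exterior Riemann map to be Möbius, hence $\Omega$ is a round disk and $h(z)=cz^{\deg h}$ — excluded. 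The refinement is proved the same way: if some component $C_j$ satisfies $C_j\cap(D\times\mathbb{C})\subseteq D\times\overline{D}$, the open mapping theorem yields a non-identity holomorphic $\rho\colon D\to D$ with $h\circ\rho=h$; being a branch of an algebraic function of degree dividing $\deg h$, $\rho$ has finite order, and $\rho(D)\subseteq D$ pins its fixed point at $0$, so $\rho$ is a rotation $z\mapsto e^{2\pi i/l}z$ ($l>1$) and $h$ is again a polynomial in $z^l$.

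It remains to observe that the final sentence of the theorem follows from $[S,T_{z^m}]=0$ by itself. From $T_{z^m}^k(z^n)=\sqrt{(n+1)/(n+km+1)}\,z^{n+km}$ and the orthonormal basis $e_n=\sqrt{n+1}\,z^n$ one computes $\|Se_{n+km}\|^{2}=\dfrac{n+km+1}{n+1}\sum_p|\langle Se_n,e_p\rangle|^{2}\dfrac{p+1}{p+km+1}$. If $Se_n\ne0$ for some $n$, choose $p$ with $\langle Se_n,e_p\rangle\ne0$; the $p$-th summand alone keeps the right-hand side bounded below by a positive constant as $k\to\infty$, contradicting $\|Se_{n+km}\|\to0$, which holds because $e_{n+km}\to0$ weakly and $S$ is compact. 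Therefore $Se_n=0$ for all $n$, so $S=0$.
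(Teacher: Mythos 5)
Your reduction to the blocks $S_{rs}$ intertwining the weighted shifts $A_r$ is the same mod-$m$ decomposition the paper uses (its operators $e_i,f_i$), and your closing compactness argument is a correct, self-contained alternative to the paper's "a compact analytic Toeplitz operator is zero." The genuinely different part is the middle: the paper conjugates each block to an operator on $L^2_a(D)$ commuting with $T_{h}$ itself and then applies a reproducing-kernel rigidity lemma (its Proposition \ref{cuck}), whose input is Proposition \ref{circle}: via Quine's theorem there is a \emph{single} nonempty open set $U\subset D$ such that for every $w\in U$ \emph{all} the other roots $\rho_2(w),\dots,\rho_d(w)$ of $h(\zeta)=h(w)$ lie outside $\overline{D}$. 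Your generating-function scheme is sound up to the functional equation $\bigl(h^{*}(\bar w)-\bar w^{d}h(z)\bigr)Z=Q$, but the two places where you locate "all the difficulty" do contain genuine gaps.

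First, the componentwise statement you reduce to --- each irreducible component of $\{h(\zeta)=h(z),\ \zeta\neq z\}$ meets $\{|z|<1<|\zeta|\}$ --- is not enough to conclude $E\mid Q$. The coefficients of $Q$ are only holomorphic on $D$, and the intersection of an irreducible component with $D\times D$ need not be connected (sheets over $D$ may only join through points with $|z|\ge 1$), so vanishing of $Q$ on one local sheet does not propagate to the whole component by the identity theorem. What the argument actually needs is the paper's uniform version: an open set $U$ of $z$'s over which all $d-1$ roots of $E(z,\cdot)$ lie in $D$ simultaneously, so that $Q(z,\cdot)$, of degree $\le d-1$, is forced to equal $\gamma(z)E(z,\cdot)$ pointwise on $U$ and then on all of $D$ by analytic continuation of the coefficients. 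Second, your proof of the function-theoretic claim is too quick at two points: (i) a component contained in $D\times\overline{D}$ over $D$ only yields a \emph{multivalued} correspondence unless its projection to the $z$-disk is unbranched over $D$, and critical points of $h$ can perfectly well lie in $\overline{D}$, so the single-valued $\rho\colon D\to D$ with $h\circ\rho=h$ is not obtained by the open mapping theorem alone; (ii) a finite-order automorphism of $D$ is a rotation about an arbitrary point $a\in D$, which would only give $h=h_1((z-a)^{l})$ --- to pin $a=0$ you must first show $\rho$ is affine (e.g.\ that it is an entire algebraic branch, hence a degree-one polynomial since $\deg(h\circ\rho)=\deg h\cdot\deg\rho$), a step your sketch omits. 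Both gaps are repaired exactly by the paper's Proposition \ref{circle}, so I would recommend substituting that statement (or reproving it via Quine's finiteness theorem for self-intersections of $h(S^1)$) for your componentwise claim; with that replacement your generating-function argument does go through.
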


\begin{cor} \label{appl} Let $f(z)$ be an arbitrary nonconstant polynomial, if $S$ is an element of the Toeplitz
algebra which commutes with $T_f,$ then $A=T_g,$ for some $g\in H^{\infty}(D).$

\end{cor}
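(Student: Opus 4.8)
The plan is to deduce Corollary~\ref{appl} from Theorem~\ref{Main} together with the theorem of Cuckovic~\cite{C} describing the centralizer of $T_{z^m}$ inside the Toeplitz algebra. First I would put the nonconstant polynomial $f$ into a normal form: writing $f(z) = f(0) + \sum_{i=1}^{k} c_{d_i} z^{d_i}$ with all $c_{d_i}\neq 0$ and $0 < d_1 < \cdots < d_k$, set $m = \gcd(d_1,\dots,d_k)$ and let $h$ be the unique polynomial with $h(z^m) = f(z)$; it exists since every $d_i$ is a multiple of $m$. Then $h$ is nonconstant, its nonconstant monomials have exponents $d_i/m$, and $\gcd(d_1/m,\dots,d_k/m) = 1$, so $h$ cannot be of the form $h_1(z^l)$ with $h_1$ a polynomial and $l > 1$: such a representation would force $l$ to divide $\gcd(d_i/m) = 1$. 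Hence $f = h(z^m)$ meets the hypotheses of Theorem~\ref{Main}, the case $m = 1$ (where $h = f$) included.

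Next I would apply Theorem~\ref{Main}: since $S$ is bounded and commutes with $T_f = T_{h(z^m)}$, it follows that $[S, T_{z^m}] = 0$. At this point the hypothesis that $S$ lies in the Toeplitz algebra is used for the first and only time: $S$ is an element of the Toeplitz algebra commuting with $T_{z^m}$, so by Cuckovic's theorem~\cite{C} it is a Toeplitz operator with bounded holomorphic symbol, i.e., $S = T_g$ for some $g \in H^{\infty}(D)$. This is the asserted conclusion (the ``$A$'' in the statement being a misprint for $S$).

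At the level of the Corollary there is essentially no obstacle once Theorem~\ref{Main} is granted: the proof is the bookkeeping above --- extract $m$ as the gcd of the exponents, invoke Theorem~\ref{Main}, and quote~\cite{C}. One can even avoid citing \cite{C} when $m = 1$, by arguing directly that an $S$ commuting with $T_z$ satisfies $Sz^n = S T_z^n 1 = T_z^n S1 = z^n\cdot S(1)$ for all $n$, hence equals multiplication by $g := S(1)$, and boundedness of $S$ forces $g \in H^{\infty}(D)$, so $S = T_g$; for $m > 1$, though, the powers $z^{km}$ do not span $L^2_a(D)$ and this shortcut fails, so \cite{C} is genuinely needed there. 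All the real work sits in Theorem~\ref{Main} itself, whose proof must control how a bounded operator commuting with $T_{h(z^m)} = h(T_{z^m})$, a polynomial in the weighted shift $T_{z^m}$, acts on the standard orthonormal basis of the Bergman space.
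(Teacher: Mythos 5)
Your proof is correct, but its second half takes a genuinely different route from the paper's. You apply the main conclusion of Theorem \ref{Main} to $S$ itself, obtaining $[S,T_{z^m}]=0$, and then quote Cuckovic's theorem from \cite{C} to identify the centralizer of $T_{z^m}$ inside the Toeplitz algebra. The paper never uses \cite{C} as a black box at this point: instead it invokes the Lemma that $[T_z,S]$ is compact whenever $S$ lies in the Toeplitz algebra (a Hankel-operator computation, reproved in the text), observes that $[T_f,[T_z,S]]=0$ because $T_f$ commutes with both $T_z$ and $S$, and then applies the \emph{compact} case of Theorem \ref{Main} to the commutator $[T_z,S]$ to conclude $[T_z,S]=0$; from there $S=T_\psi$ follows from Proposition \ref{cuck} with $h(z)=z$, or from the elementary argument you sketch for $m=1$. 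The trade-off is real: your version is shorter but leans on the full strength of the prior result in \cite{C} for the $z^m$ centralizer, whereas the paper's version is self-contained modulo the Hankel lemma and in fact recovers Cuckovic's theorem as the special case $f=z^m$ of the Corollary --- which is presumably why the ``if in addition $S$ is compact, then $S=0$'' clause was built into Theorem \ref{Main} in the first place. Your normal-form step (taking $m$ to be the gcd of the exponents of the nonconstant monomials of $f$) is exactly the bookkeeping the paper leaves implicit, and you handle it correctly.
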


The proof will follow closely ideas of Cuckovic \cite{C}, and Cuckovic-Fan \cite{CF}.
The key step is the following.

\begin{prop}\label{circle} Let $h(z)\in \mathbb{C}[z]$ be a polynomial which may not be written
as a polynomial in $z^m$ for any $m>1.$ Then there is a nonempty open set $U\subset D,$
such that  $\frac{h(z)-h(w)}{z-w}\neq 0,$ for all $w\in U, z\in \bar{D}.$
\end{prop}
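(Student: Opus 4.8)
The plan is to reformulate the statement and argue by contradiction. Put $n=\deg h$ and note that $g(z,w):=\frac{h(z)-h(w)}{z-w}$ is a polynomial with $g(w,w)=h'(w)$; thus, for a fixed $w$, ``$g(z,w)\neq 0$ for all $z\in\bar D$'' means precisely that $h'(w)\neq 0$ and that $w$ is the only point of $\bar D$ in its fibre $h^{-1}(h(w))$. If even one $w_0\in D$ has this property, then a small neighbourhood $U$ of $w_0$ works, because the remaining $n-1$ preimages of $h(w_0)$ lie in the open set $\mathbb{C}\setminus\bar D$ and this, together with $h'(w_0)\neq 0$, persists for $w$ near $w_0$. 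So I only have to produce a single such $w_0$. If none existed, then the set $B:=\{w\in D:\ g(\cdot,w)\text{ has a zero in }\bar D\}$ --- which is the image of the closed set $\{(z,w)\in\bar D\times D:g(z,w)=0\}$ under the projection along the \emph{compact} factor $\bar D$, hence closed in $D$ --- would be dense in $D$ (this being the negation of the proposition), and therefore equal to $D$: every $w\in D$ would have a partner $z\in\bar D$ with $h(z)=h(w)$, $z\ne w$ (or $h'(w)=0$).

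Under this assumption I would use the argument principle. For $v$ off the real-analytic closed curve $\Gamma:=h(\partial D)$, let $N(v)$ be the number of solutions of $h(z)=v$ in $D$; it equals the winding number of $\Gamma$ about $v$, so it is locally constant on $\mathbb{C}\setminus\Gamma$, vanishes near $\infty$, and attains some value $M\geq 1$ because $h(D)$ is open and nonempty. Away from the finitely many critical values of $h$, and \emph{provided} $\Gamma$ has only finitely many ``multiple'' points $\{v:|h^{-1}(v)\cap\partial D|\geq 2\}$, the curve $\Gamma$ minus a finite set is a disjoint union of smooth simple arcs --- each arc being the image of an arc of $\partial D$ under a local biholomorphism --- and crossing such an arc transversally changes $N$ by exactly $\pm1$. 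Joining a point where $N=M$ to a point near $\infty$ by a generic path, which then meets $\Gamma$ only transversally at these arcs, $N$ must drop from $M$ to $0$ in unit steps and so take the value $1$. Picking $v_1$ with $N(v_1)=1$, off $\Gamma$ and a regular value of $h$, its single $D$-preimage $w_0$ is a regular point of $h$ and the only point of $\bar D$ in $h^{-1}(v_1)$, contradicting the previous paragraph. Hence everything reduces to the finiteness of the multiple points of $\Gamma$.

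The hard part --- and the only place where the hypothesis on $h$ is used --- is precisely this finiteness, equivalently the finiteness of the coincidence set $C:=\{(z,w)\in\partial D\times\partial D:\ z\ne w,\ h(z)=h(w)\}$, equivalently that no non-diagonal component of the plane curve $\{h(z)=h(w)\}$ meets $\partial D\times\partial D$ in an infinite set. If $C$ were infinite it would, being (a piece of) a real-algebraic set, contain a real-analytic arc, along which $w=\varphi(z)$ for a real-analytic $\varphi\ne\mathrm{id}$ defined on a sub-arc of $\partial D$, with values in $\partial D$ and $h\circ\varphi=h$. Now $\varphi$ is a branch of the algebraic correspondence $h^{-1}\circ h$, so it is holomorphic near that sub-arc; Schwarz reflection across the unit circle plus the identity theorem force $\varphi(z)\overline{\varphi(1/\bar z)}\equiv 1$, so $\varphi$ is a circle map and, passing to the interior, extends to a holomorphic self-map $\varphi:D\to D$ still satisfying $h\circ\varphi=h$. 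If $\varphi$ were not an automorphism of $D$, then by the Denjoy--Wolff theorem the iterates $\varphi^{(k)}$ would converge on compacta to a constant, making $h=h\circ\varphi^{(k)}$ constant --- impossible. So $\varphi\in\mathrm{Aut}(D)$; since $h\circ\varphi=h$ and $h$ is a polynomial, $\varphi$ must fix $\infty$, hence $\varphi(z)=\zeta z$ with $|\zeta|=1$, and $h(\zeta z)=h(z)$ with $\varphi\ne\mathrm{id}$ forces $\zeta$ to be a primitive $m$-th root of unity, $m>1$, so $h=h_1(z^m)$ --- contradicting the hypothesis. The step I expect to be the genuine obstacle is this last one: ruling out infinite coincidence sets produced by \emph{non-linear} components of $\{h(z)=h(w)\}$, such as the conic $z^2+zw+w^2=\text{const}$ that already occurs for cubic $h$.
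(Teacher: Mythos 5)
Your topological reduction is sound and, modulo one input, complete: finding a single regular point $w_0\in D$ whose $h$-fibre meets $\bar D$ only at $w_0$ does suffice (the bad set of $w$'s is closed in $D$ by compactness of $\bar D$), and your winding-number argument producing a value $v_1$ with exactly one preimage in $D$ and none on $S^1$ is a correct, mildly different route from the paper's, which instead picks a boundary point of $h(\overline D)$ attained at a single non-critical point of $S^1$. Both routes rest on exactly the same input, which you correctly isolate: the coincidence set $C=\{(z,w)\in S^1\times S^1:\ z\neq w,\ h(z)=h(w)\}$ is finite. The paper gets this by citing Quine's theorem \cite{Q}, whose hypothesis is precisely that $h$ is not a polynomial in $z^m$; that citation is where the hypothesis enters the paper's proof, and it is the only place.

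The genuine gap is your attempted reproof of that finiteness. From an infinite $C$ you extract a branch $\varphi$ of $h^{-1}\circ h$, holomorphic near an arc of $S^1$ and mapping it into $S^1$, and then assert that $\varphi$ ``passes to the interior'' as a holomorphic self-map of $D$. That extension is unjustified: $\varphi$ is a local branch of an algebraic correspondence, and if the irreducible component of $\{h(z)=h(w)\}$ carrying the arc has degree $>1$ over the $z$-line (your own example $z^2+zw+w^2=\mathrm{const}$ for cubic $h$), analytic continuation of $\varphi$ into $D$ meets branch points over $h^{-1}(h(\mathrm{crit}\,h))$ and need not return a single-valued map $D\to D$, so the Denjoy--Wolff/automorphism dichotomy never gets started. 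Your closing sentence concedes exactly this. What you have actually proved is the special case in which the component is a graph $w=\varphi(z)$; excluding infinite intersections of the higher-degree components with the torus is precisely the content of Quine's theorem, and without it (or a citation to \cite{Q}) the proof is incomplete. The remaining steps --- the reflection identity, killing non-affine automorphisms via the pole of $\varphi$ at $1/\bar a$, and deducing $h=h_1(z^m)$ from $h(\zeta z)=h(z)$ --- are fine.
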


\begin{proof}
The open mapping property of $h(z)$ implies that the boundary of $h(\overline{D})$ is a subset of $h(S^1)$ (where $S^1$ is the unit circle, the boundary of $D$) and is disjoint from $h(D).$ Since a theorem of Quine \cite{Q}
states that there are only finitely many pairs $(z, w),$ such that $z\neq w, z, w\in S^1,
h(z)=h(w),$ we may conclude that there is a point $w\in S^1$, such that $h(w)$ belongs to the boundary of $h(\overline{D}), \frac{\partial }{\partial z}h(w)\neq 0$ and for all $z\neq w, z\in S^1, f(z)\neq f(w).$
But, this implies that $h(z)\neq f(w),$ for all $z\in \overline{D}.$ This implies that
there is a nonempty open set $U\in D$ with the desired property. 

\end{proof}

\begin{prop}\label{cuck} Suppose that $h\in \mathbb{C}[z]$ satisfies the conclusion of Proposition \ref{circle},
 then any bounded operator 
$S:L^2_a(D)\to L^2_a(D),$ which commutes with $T_h$ must be of the form $T_f,$ for some
$f\in H^{\infty}(D).$
\end{prop}
The following proof is contained in \cite{CF}.
\begin{proof}
Recall that for any $g(z)\in L^2_a(D), \langle g, K_z\rangle=g(z)$, where $K_z$ is the reproducing kernel.
This gives $T_h^{*}K_w=\overline{f(w)}K_w,$ \\in particular, $T^{*}_{h(z)-h(w)}K_w=0.$
Thus, since $S^{*}$ and $T^{*}_h$ commute, we have $T^{*}_{f(z)-f(w)}(S^{*}K_w)=0.$
Which means that $S^{*}K_{w}$ is orthogonal to the image of $T_{h(z)-h(w)},$ 
which by our proposition is $(z-w)L^2_a(D),$ for all $w\in U.$ But, since $K_w$
is also orthogonal to the above, we have that $S^{*}K_w=\psi(w)K_w$, for all $w\in U,$
where $\psi(w)$ is some function on $U.$
Thus, $$\langle g, S^{*}K_w\rangle=\langle S(g), K_w\rangle=S(g)(w)=\overline{\psi(w)}g(w).$$ This implies that
$[S, T_z](g)|U=0,$ so $S$ commutes with $T_z,$ therefore $S=T_{\eta},$ for some bounded analytic
$\eta$.

\end{proof}

\begin{proof}[Proof of Theorem \ref{Main}] For any $0\leq i<m, $ consider bounded linear
operators $e_{i}, f_{i}:L^2_a(D)\to L^2_a(D)$ defined as follows:
$e_{i}(z^n)=z^{i+nm}, f_{i}(z^{i+nm})=z^n$ for all $n$. Thus, $f_{i}$ is the
composition of the orthogonal projection
of $L^2_a(D)$ on $e_i(L^2_a(D))$ with $e_i^{-1}.$ Let us put $T_{i, j}=f_jSe_i.$
It is clear
that $S_{i, j}$ commutes with $T_{h_1(z)}.$ So $S_{i, j}$  
is given
by $T_{\psi_{i, j}},$ for some bounded analytic $\psi_{i, j},$ by propositions \ref{circle}, \ref{cuck}, this
implies that $S$ commutes with $T_{z^m}.$ If in addition, $S$ is compact, then so are
operators $e_iSf_j=T_{\psi_{ij}}:L^2_a(D)\to L^2_a(D),$ which forces $\psi_{ij}=0,$ so $e_iSf_j=0$
for all $i, j,$ so $S=0.$

\end{proof}
Now we turn to the proof of Corollary \ref{appl}.

\begin{lemma} \cite{C} If $S$ belongs to the Toeplitz algebra, then $[T_z, S]$
is a compact operator.
\end{lemma}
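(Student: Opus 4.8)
The plan is to pass to the Calkin algebra $\mathcal B(L^2_a(D))/\mathcal K$, where $\mathcal K$ is the ideal of compact operators and $\pi$ the quotient map, and to show that $\pi(T_z)$ commutes with $\pi(S)$ for every $S$ in the Toeplitz algebra $\mathcal T$. Since $\mathcal T$ is by definition the $C^{*}$-algebra generated by $\{T_f : f\in L^\infty(D)\}$ and this generating set is self-adjoint (because $T_f^{*}=T_{\bar f}$), the subalgebra it generates is automatically closed under adjoints, hence norm-dense in $\mathcal T$. Therefore the norm-closed subalgebra $\{X : [\pi(T_z),X]=0\}$ of the Calkin algebra --- the kernel of the bounded derivation $X\mapsto[\pi(T_z),X]$ --- contains all of $\pi(\mathcal T)$ as soon as it contains every $\pi(T_f)$. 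So the statement reduces to: $[T_z,T_f]$ is compact for every $f\in L^\infty(D)$.

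For this I would use the semicommutator identity $T_\phi T_\psi - T_{\phi\psi} = -H_{\bar\phi}^{*}H_\psi$, where $H_\psi\colon L^2_a(D)\to L^2(D)\ominus L^2_a(D)$ is the Hankel operator $H_\psi g=(I-P)(\psi g)$; it comes from expanding $T_\phi T_\psi g = P\bigl(\phi\,P(\psi g)\bigr)$ and noting that $P(\phi\,\cdot\,)$ restricted to $L^2(D)\ominus L^2_a(D)$ is $H_{\bar\phi}^{*}$. Two facts then finish the job. First, $H_z=0$, since $zg$ is holomorphic whenever $g$ is, so $T_f T_z = T_{fz}$ on the nose. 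Second, $H_{\bar z}$ is Hilbert--Schmidt: a one-line computation on the orthonormal basis $e_n=\sqrt{n+1}\,z^n$ yields $\|H_{\bar z}e_n\|^{2}=\frac{1}{(n+1)(n+2)}$, which is summable. Hence $[T_z,T_f] = T_zT_f - T_fT_z = \bigl(T_{zf}-H_{\bar z}^{*}H_f\bigr) - T_{fz} = -H_{\bar z}^{*}H_f$, a bounded operator $H_f$ (with $\|H_f\|\le\|f\|_\infty$) composed with the compact operator $H_{\bar z}^{*}$, hence compact.

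The rest is formal: $\pi$ being a $*$-homomorphism, $\pi(T_z)$ commutes with every word in the $\pi(T_f)$, and then with every element of $\pi(\mathcal T)$ by the reduction above; lifting back gives $[T_z,S]\in\mathcal K$ for all $S\in\mathcal T$.

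The main obstacle is the compactness of $H_{\bar z}$ --- equivalently, a bound on the semicommutator $T_zT_f-T_{zf}$ that is uniform in $f$. This is the only place the concrete geometry of the Bergman space enters; once $H_z=0$ and $H_{\bar z}$ compact are in hand the argument is pure $C^{*}$-algebra. (One could instead quote the general fact that $H_\phi$ is compact for $\phi\in C(\overline D)$, but the direct basis computation for $\phi=\bar z$ is more self-contained.)
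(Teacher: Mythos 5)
Your proposal is correct and follows essentially the same route as the paper: reduce to the generators $T_f$ (the paper via the two-sided ideal property of the compacts, you via the Calkin algebra, which is the same reduction made explicit), then factor $[T_z,T_f]=-H_{\bar z}^{*}H_f$ through Hankel operators and invoke compactness of $H_{\bar z}$. The only difference is that you verify the compactness of $H_{\bar z}$ by the Hilbert--Schmidt computation $\|H_{\bar z}e_n\|^2=\frac{1}{(n+1)(n+2)}$ (which checks out), where the paper simply cites it as well known.
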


We recall the proof for the convenience of the reader.
\begin{proof}
Since compact operators form a two sided ideal in the algebra of bounded operators,
it is enough to check that $[T_f, T_z]$ is compact for any $f\in L^{\infty}(D).$
However $[T_f, T_z]=H^{*}_{\bar{z}}H_f,$ where 
$H_f(g)=fg-T_f(g), H_f:L^2_a(D)\to {L^2}(D)^{\perp}$ is the Hankel operator with symbol $f.$
It is well-known that the Hankel operator $H_{\bar{z}}$ is compact, so we are done.

\end{proof}

Now we can easily proof Corollary \ref{appl}. If $S$ belongs to the Toeplitz algebra, and it
commutes with $T_{h}$ for a nonconstant $h\in \mathbb{C}[z]$, then by the above
$[T_z, S]$ is compact. But, since $[T_h, [T_z, S]]=0,$ Theorem \ref{Main}  implies that  
$T_z$ commutes with
$S$, forcing $S$ to be of the form $T_{\psi}$, for some bounded holomorphic $\psi.$

It is natural to ask if Theorem \ref{Main} and Corollary \ref{appl} hold for arbitrary
nonconstant bounded holomorphic functions. A positive indication in this direction is provided
by a well-known theorem of Axler-Cuckovic-Rao \cite{ACR}, which says that  if $T_{g}$ 
 commutes with $T_f$ for bounded $g$ and nonconstant holomorphic $f$ 
 (in fact for an arbitrary bounded domain, not just the unit disk $D$) , then $g$ must be holomorphic.
However, both Theorem \ref{Main} and Corollary \ref{appl} fail for arbitrary nonconstant bounded
holomorphic functions (contrary to what is claimed in \cite{L}). We present two examples below.

 The first example, due to Trieu Le, shows that Theorem \ref{Main} fails for 
 arbitrary holomorphic functions. 
 Indeed, let $g:D\to D$ be a Mobius automorphism of the unit disc
 $g(z)=\frac{z-a}{1-\bar{a}z}, a\in D, a\neq 0$, and
 let $T:L^2_a\to L^2_a$ be a bounded operator. Let us denote by $gT:L^2_a(D)\to L^2_a(D)$ an operator
 defined as follows $gT(f(z))(w)=T(f(g^{-1}(z))(g(w)), f\in L^2_a(D), w\in D.$
  Let us take an operator $T,$ which commutes with $z^n,$ such that $T$ is not
   a Toeplitz operator with an analytic symbol. Then, $gT$ commutes
 with $T_{g^n}$ and is not a Toeplitz operator with an analytic symbol.
  However, $g^n$ satisfies the condition of the proposition, namely, 
  it cannot be written as a holomorphic
 function of $z^m,$ for any $m>1.$ 

The next example shows that even Corollary \ref{appl} is false for arbitrary nonconstant holomorphic functions. Indeed,
  an example of Cowen \cite{Co} provides a bounded holomorphic function whose Toeplitz symbol
   commutes with a compact
  operator on the Hardy space. But exactly the same example works for the Bergman space setting.
   Let us recall Cowen's example for the convinience of the reader. Let $\sigma (z)=(i-1)(1+z)^{-\frac{1}{2}}.$
 Then $J(z)=\sigma^{-1}(\sigma(z)+2\pi i)$ maps $D$ to itself, continuously extends to the boundary of
 $D$ and $J(\overline{D})\subset D\cup {-1}, J(-1)=-1.$ Let $f(z)=\exp (\sigma(z))-\exp (\sigma(0)),$
then $f$ is a bounded analytic function on $D$ and $f(J(z))=f(z)$ for all $z\in D.$ Denote by
$C_{J}$ the composition operator of $J,$ so $C_{J}(f)=f(J) f\in L^2_a(D).$ Now claim is that the operator
$L=C_JT_{z+1}$ is compact. Indeed, let $||z^ng_n||=1.$ For any $\epsilon>0,$ let 
$K_{\epsilon}\subset D$ denote the set of all $z,$ such that $|1+J(z)|\geq\epsilon.$ 
Clearly $\overline{J(K_{\epsilon})}$ is compact, so there is $0<\delta<1,$ such that 
 $J(z)<\delta<1$ for all $z\in K_{\epsilon}.$
It is also clear
that $||g_n||\leq \sqrt{n+1}.$ We have 
$$\int_{K_{\epsilon}}|j^{n}(z)g_n(J(z))(J(z)+1)|^2d\mu\leq 2 \delta^{2n}||C_J||\sqrt{n+1},$$ which clearly
goes to 0 as $n\to \infty.$ On the other hand, 
$$\int_{D\setminus K_{\epsilon}}|j^{n}(z)g_n(J(z))(J(z)+1)|^2d\mu\leq \epsilon ||C_J||.$$ 
Therefore, we can conclude that $||L_{z^nL^2_a}||\to 0,$ so $L$ is compact and it commutes with
$T_{f}.$ But as it is well-known, all compact operators belong to the Toeplitz algebra, and since a nonzero
compact operator can not be a toeplitz operator with an analytic symbol, we see that Corollary \ref{appl} is false
for arbitrary analytic functions.

 Finally, we present a partial result for centralizers of $T_f$, where $f$
 is a nonconstant bounded holomorphic function. To state our result,
 we must recall that any function $g\in L^2(D)$ admits a polar
 decomposition $$g(re^{it})=\sum _{k=-\infty}^{+\infty}e^{ikt}g_k(r),$$
 where $f_r$ are radial functions.
 
 We have the following
 
\begin{prop} Suppose that an operator $S$ belongs to the algebra generated by Toeplitz
operators of the form $T_{g}, g_k=0$ for $k<<0.$ If $S$ commutes with $T_{f},$ 
where $f$ is bounded holomorphic function such that $f'(0)\neq 0$, then $S=T_\psi$ for some
$\psi\in H^{\infty}(D).$

\end{prop}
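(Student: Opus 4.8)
The plan is to follow the same strategy that proved Theorem \ref{Main}, replacing the ``block decomposition into $m$ pieces'' argument by a decomposition adapted to the Fourier modes. The hypothesis on $S$ --- that it lies in the algebra generated by Toeplitz operators $T_g$ with $g_k = 0$ for $k \ll 0$ --- should be used precisely to guarantee that $S$ itself does not lower Fourier frequencies by more than a bounded amount; that is, writing $L^2_a(D) = \bigoplus_{n\ge 0}\mathbb{C}z^n$, the operator $S$ should satisfy $\langle S z^n, z^j\rangle = 0$ whenever $j < n - N$ for some fixed $N$. (This is preserved under products and sums, so it suffices to check it for a single generator $T_g$, where it is immediate from the formula for the matrix entries of a Toeplitz operator in terms of the Fourier--Taylor coefficients of $g$.)

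First I would record the reproducing-kernel computation that already appears in the proof of Proposition \ref{cuck}: since $f$ is holomorphic, $T_f^{*}K_w = \overline{f(w)}K_w$, hence $T^{*}_{f(z)-f(w)}K_w = 0$, and because $S^{*}$ commutes with $T_f^{*}$ we get $T^{*}_{f(z)-f(w)}(S^{*}K_w) = 0$. The geometric heart of the matter is then to show that for $w$ in a suitable nonempty open set $U\subset D$ the image of $T_{f(z)-f(w)}$ is exactly $(z-w)L^2_a(D)$, equivalently that $\frac{f(z)-f(w)}{z-w}$ is bounded below on $\bar D$; this is the analogue of Proposition \ref{circle}. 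The condition $f'(0)\neq 0$ is there to make $w = 0$ (or a small neighborhood of it) a legitimate choice: near $z = 0$ the difference quotient equals $f'(0)\neq 0$, and on the rest of $\bar D$ one argues that, after possibly shrinking $U$ around $0$, $f(z)\neq f(w)$ for $z$ away from $w$ --- here one may need $f$ to extend continuously to $\bar D$, or to work with the fact that $f$, being bounded holomorphic, has radial limits and an argument principle still applies; this is the step I expect to be the main obstacle, since for a general bounded holomorphic $f$ the clean open-mapping/Quine argument of Proposition \ref{circle} is not directly available.

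Granting that, the argument of Proposition \ref{cuck} gives $S^{*}K_w = \psi(w)K_w$ for $w\in U$, whence $\langle Sg, K_w\rangle = \overline{\psi(w)}\,g(w)$ for all $g\in L^2_a(D)$ and $w\in U$, so $[S, T_z](g)$ vanishes on $U$. Since $[S,T_z](g)$ is holomorphic on $D$ and vanishes on the nonempty open set $U$, it vanishes identically, so $[S, T_z] = 0$ and therefore $S = T_\psi$ for some $\psi\in H^\infty(D)$, exactly as at the end of the proof of Proposition \ref{cuck}.

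A word on where the frequency hypothesis enters: in Theorem \ref{Main} it was the periodicity $h(z^m)$ that allowed passage to $m$ separate copies of a problem to which Proposition \ref{cuck} applies; here there is no periodicity, and instead the one-step structure is already built into $f$ (via $f'(0)\neq 0$, giving a genuine ``$z - w$'' factor), while the lower-boundedness of $S$'s Fourier displacement is what makes the reproducing-kernel identity $T^{*}_{f(z)-f(w)}(S^{*}K_w)=0$ translate into a statement that can be localized and then propagated off $U$; without such a hypothesis one runs into the Möbius-twist counterexample of Trieu Le described above. If the localization step in the previous paragraph needs it, I would reprove the ``image of $T_{f(z)-f(w)}$ equals $(z-w)L^2_a(D)$'' claim by factoring $T_{f(z)-f(w)} = T_{(z-w)}\,T_{q_w}$ with $q_w(z) = \frac{f(z)-f(w)}{z-w}\in H^\infty(D)$ and checking that $T_{q_w}$ is invertible for $w$ near $0$, using $q_0(0) = f'(0)\neq 0$ and a continuity/perturbation estimate in $w$.
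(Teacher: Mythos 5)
Your proposal hinges on the claim that for $w$ in some nonempty open set $U$ around $0$ the image of $T_{f(z)-f(w)}$ equals $(z-w)L^2_a(D)$, i.e.\ that $q_w(z)=\frac{f(z)-f(w)}{z-w}$ is bounded below on $D$. You correctly flag this as the main obstacle, but it is not merely an obstacle: it is false for general bounded holomorphic $f$ with $f'(0)\neq 0$, and no shrinking of $U$ saves it. The paper's own counterexample makes this concrete: take $f=g^n$ with $g$ a M\"obius automorphism of $D$, $g(0)\neq 0$. Then $f'(0)=ng(0)^{n-1}g'(0)\neq 0$, yet for every $w$ the equation $f(z)=f(w)$ has the $n$ interior solutions $g^{-1}(\zeta g(w))$, $\zeta^n=1$, so $q_w$ vanishes at $n-1$ points of $D$ and the range of $T_{q_w}$ is not dense. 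Worse, for this very $f$ the statement ``every bounded $S$ commuting with $T_f$ equals $T_\psi$'' is false (that is the point of Trieu Le's example), and your reproducing-kernel argument never uses the hypothesis that $S$ lies in the algebra generated by $T_g$ with $g_k=0$ for $k\ll 0$, except in a vague closing remark about ``localization.'' An argument that does not use that hypothesis in an essential way would prove a false statement, so the kernel route cannot be repaired by a perturbation estimate on $T_{q_w}$.

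The paper's proof is entirely different and algebraic. From the Cuckovic--Louhichi computation $T_{e^{ikt}g(r)}(z^n)\in\mathbb{C}z^{n+k}$ one deduces that $S$ has band structure: for each fixed $k\geq 0$, $\langle S(z^n),z^k\rangle=0$ once $n\gg 0$, i.e.\ $S$ lowers degree by at most a bounded amount. (Your first paragraph does record essentially this fact, and it is the one part of your proposal that matches the paper.) The hypothesis $f'(0)\neq 0$ is then used not geometrically but formally: one inverts $f$ as a power series at $0$, producing for each $k$ a polynomial $\phi_k$ with $z-\phi_k(f(z))\in z^kH^{\infty}(D)$. Since $S$ commutes with $T_{\phi_k(f)}=\phi_k(T_f)$, one has $[S,T_z]=[S,T_{z-\phi_k(f)}]$, and each fixed matrix entry $\langle [S,T_z](z^l),z^n\rangle$ vanishes for $k\gg 0$ because $T_{z-\phi_k(f)}$ raises degree by at least $k$ while $S$ lowers it by a bounded amount. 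Hence $[S,T_z]=0$ and $S=T_\psi$. This degree-counting argument is the route you should take.
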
 

\begin{proof}
Recall that by a computation from \cite{CL}, $T_{e^{ikt}g(r)}(z^n)$ is a multiple of $z^{n+k},$ 
 for all $k, n.$ In particular, for any $k\geq 0$ $\langle S(z^n), z^k\rangle=0$ as long as
 $n>>0.$ Let $f=a_0+a_1z+z^2f_1, a_1\neq 0, m>0, f_1\in H^{\infty}(D).$ It suffices to show that
 $S$ commutes with $z^m.$ Notice that for any $z^k,$ there exists a polynomial $\phi_k(z),$
such that $z-\phi_k(f(z))\in z^kH^{\infty}(D).$ This implies that for any $m, l\geq 0$
$$\langle [S, T_z](z^l), z^n\rangle=\langle [S, T_{z}-T_{\phi_k(f(z))}](z^l), z^n\rangle=0$$
for $k>>0.$ So, $T_{z}$ commutes with $S,$ and we are done.

\end{proof}

 \acknowledgement{I am enormously grateful to Trieu Le for many interesting discussions, in particular for
 telling me about the commuting problem for the Toeplitz algebra. I would like to thank
 Zeljko Cuckovic  for showing me his paper \cite{CF}, ideas from which played the crucial role.
 Special thanks are due to Mitya Boyarchenko for his key insight with the proof of proposition 0.1. 
  I also would like to thank
 Abdel Yousef for telling me about operators with quasi-homogeneous symbols. }

\end{document}